\documentclass[12pt]{amsart}

\usepackage{amscd,amssymb,amsopn,amsmath,amsthm,mathrsfs,graphics,amsfonts,enumerate,verbatim,calc
}

\usepackage[all]{xy}

\usepackage{color}


\usepackage[OT2,OT1]{fontenc}
\newcommand\cyr{%
\renewcommand\rmdefault{wncyr}%
\renewcommand\sfdefault{wncyss}%
\renewcommand\encodingdefault{OT2}%
\normalfont
\selectfont}
\DeclareTextFontCommand{\textcyr}{\cyr}

\usepackage{amssymb,amsmath}

\DeclareFontFamily{OT1}{rsfs}{}
\DeclareFontShape{OT1}{rsfs}{n}{it}{<-> rsfs10}{}
\DeclareMathAlphabet{\mathscr}{OT1}{rsfs}{n}{it}

\topmargin=0in
\oddsidemargin=0in
\evensidemargin=0in
\textwidth=6.5in
\textheight=8.5in

\numberwithin{equation}{section}
\hyphenation{semi-stable}

\newtheorem{theorem}{Theorem}[section]
\newtheorem{lemma}[theorem]{Lemma}

\newtheorem{corollary}[theorem]{Corollary}

\newtheorem{question}{Question}
\newtheorem{notation}{Notation}

\newtheorem*{maintheorem}{Main Theorem}

\theoremstyle{definition}

\newtheorem{remark}[theorem]{Remark}
\theoremstyle{remark}

\newtheorem{acknowledgement}{Acknowledgement}




\begin{document}
\title[System of Parameters]{Uniform Annihilators of Systems of parameters}

\author[Pham Hung Quy]{Pham Hung Quy}
\address{Department of Mathematics, FPT University, Hanoi, Vietnam}
\email{quyph@fe.edu.vn}

\thanks{2010 {\em Mathematics Subject Classification\/}: 13H99, 13D45.\\
The work is partially supported by a fund of Vietnam National Foundation for Science
and Technology Development (NAFOSTED) under grant number 101.04-2020.10.}

\keywords{System of parameters, local cohomology}

\maketitle

\begin{abstract} In this paper we give uniform annihilators for some relations of all systems of parameters in a local ring. Our results provide simple proofs for some classical results.  
\end{abstract}

\section{Introduction}
In this note, let $(R, \frak m)$ be a Noetherian local ring. Suppose $\dim R = d$ and $\underline{x} = x_1, \ldots, x_d$ a system of parameters of $R$. For all $s \le d$, set $Q_s = (x_1, \ldots, x_s)$, and $Q = Q_d$. It is well known that if $R$ is Cohen-Macaulay then $x_1, \ldots, x_d$ is a regular sequence. In this case $x_1, \ldots, x_d$ act as independent variables. For all $0 \le s \le d-1$, any linear relation
$$a_1 x_1 + \cdots + a_s x_s + a_{s+1} x_{s+1} = 0$$
is trivial, i.e. $a_{s+1} \in (x_1, \ldots, x_s) : x_{s+1} = (x_1, \ldots, x_s)$. In general, $(x_1, \ldots, x_s) : x_{s+1}$ often strictly contains $(x_1, \ldots, x_s)$ and the situation becomes complicated. Schenzel \cite{Sch82} defined the ideal 
$$\frak b(R) = \bigcap_{\underline{x}, s <d} \mathrm{Ann}\big(\frac{Q_s : x_{s+1}}{Q_s} \big),$$
where $\underline{x}$ runs over all systems of parameters of $R$. The ideal $\frak b(R)$ controls all non-trivial linear relations of systems of parameters. By the definition we can not expect to determine $\frak b(R)$ explicitly, but {\it is $\frak b(R)$ a non-zero ideal}? For $i \le d$ let $\frak a_i(R) = \mathrm{Ann}(H^i_{\frak m}(R))$, and $\frak a(R) = \prod_{i=0}^{d-1} \frak a_i(R)$. Schenzel \cite[Satz 2.4.5]{Sch82} proved the following inclusions
$$\frak a(R) \subseteq \frak b(R) \subseteq \frak a_0(R) \cap \cdots \cap \frak a_{d-1}(R).$$ 
Therefore $\frak a(R)$ and $\frak b(R)$ have the same radical ideal. Importantly, if $R$ is a homomorphic image of a Cohen-Macaulay local ring then $\dim R/\frak a(R) < d$, so $\frak b(R) \neq 0$ in this case. Moreover we can choose a parameter element $x \in \frak b(R)$. This kind of parameter elements admits several nice properties, see \cite[Sections 3 and 4]{CQ16-2} for more details.\\
Another type of relation of system of parameters comes from the Hochster monomial conjecture. For all $s \le d$ we define the {\it limit closure} $Q_s^{\lim}$ of $Q_s$ as the formula
$$Q_s^{\lim}  = \bigcup_{n \ge 1} \big( (x_1^{n+1}, \ldots, x_s^{n+1}) : (x_1 \cdots x_s)^n \big).$$
Thanks to Hochster in equicharacteristic \cite{Hoc73} and Andr\'{e} in mixed characteristic \cite{Ad18} we have $Q^{\lim} \subseteq \frak m$.\footnote{Recently, Ma, Smirnov and the author \cite{MQS19} showed that $Q^{\lim} \subseteq \overline{Q}$, the integral closure of $Q$, for every parameter ideal $Q$ provided $R$ is quasi-unmixed.} One can check that (see \cite[Lemma 5.6]{PQ19}) 
$$\frak b(R)^s Q_s^{\lim} \subseteq Q_s$$
for all systems of parameters $\underline{x}$, and for all $s \le d$. In particular we have
$$\frak a(R)^d \subseteq \bigcap_{\underline{x}, s \le d}\mathrm{Ann}\big( \frac{Q_s^{\lim}}{Q_s} \big).$$
In this paper we are interested in relations concerning powers of parameter ideals. Inspired by $\frak b(R)$ it is natural to define the ideal  
$$\frak c(R) = \bigcap_{\underline{x}, s <d, n \ge 1} \mathrm{Ann}\big(\frac{Q_s^n : x_{s+1}}{Q_s^n} \big),$$
where $\underline{x}$ runs over all systems of parameters of $R$. It is clear that $\frak c(R) \subseteq \frak b(R)$.
\begin{question}\label{Q1}
Can we choose $N$, depending only on $d$, such that $\frak a(R)^N \subseteq \frak c(R)$?
\end{question}
We are also interested in the {\it parametric decomposition} of powers of parameter ideals. The parametric decomposition was considered in \cite[Section 3]{LT81}. Then it was studied by Heinzer, Ratliff and Shah in \cite{HRS95}, and followed by many researchers \cite{GS04-1,GS04-2,CT09}. For each $1 \le s \le d$ and $n \ge 1$ we set
$$\Lambda_{s, n} = \{(\alpha_1, \ldots, \alpha_s) \in \mathbb{N}^s \mid \alpha_i \ge 1 \,\text{for all}\, 1\le i \le s, \sum_{i=1}^s \alpha_i = s+n-1 \}.$$
For all $s \le d$ and all $\alpha \in \Lambda_{s, n}$, set $Q_s(\alpha) = (x_1^{\alpha_1}, \ldots, x_s^{\alpha_s})$. It is not hard to see that
$$Q_s^n  \subseteq \bigcap_{\alpha \in \Lambda_{s, n}} Q_s(\alpha)$$
for all $n \ge 1$. Moreover the equalities hold for all $s$ and $n$ provided $R$ is Cohen-Macaulay by \cite[Theorem 2.4]{HRS95} (see  Remark \ref{new proof HSR} for a short proof). In general, we consider the ideal
$$\frak d(R) = \bigcap_{\underline{x}, s \le d, n \ge 1} \mathrm{Ann} \big(\frac{\cap_{\alpha \in \Lambda_{s, n}} Q_s(\alpha)}{Q_s^n} \big),$$
where $\underline{x}$ runs over all systems of parameters of $R$. 
\begin{question} \label{Q2}
Can we choose $N$, depending only on $d$, such that $\frak a(R)^N \subseteq \frak d(R)$?
\end{question}
It is quite surprising that the two above questions are closely related. The aim of this paper is to give positive answers for both questions.
\begin{maintheorem}\label{maim thm} Let $(R, \frak m)$ be a local ring of dimension $d$. Then 
\begin{enumerate}
\item $\frak a(R) \subseteq \frak c(R)$ if $d = 1$, and $\frak a(R)^{2^{d-2}} \subseteq \frak c(R)$ if $d \ge 2$. 
\item $\frak a(R)^{2^{d-1}} \subseteq \frak d(R)$.
\end{enumerate}
\end{maintheorem}
\begin{remark} Recently, we learned that Question \ref{Q1} was studied by Lai when $R$ admits a dualizing complex by a very different method \cite[Theorem 3.2, Proposition 3.4]{L95}.
\end{remark}
\subsection{Some direct applications}
We present the usefulness of the Main Theorem in proving the colon capturing of integral closure and tight closure. The readers are encouraged  to compare our proof with the old ones \cite[Theorem 5.4.1]{HS06} and \cite[Theorem 10.1.9]{BH98}.
\begin{corollary} Let $(R, \frak m)$ be an equidimensional local ring of dimension $d$ that is a homomorphic image of a Cohen-Macaulay local ring. Let $\underline{x} = x_1, \ldots, x_d$ be a system of parameters of $R$. Then for all $s < d$ and all $n \ge 1$ we have 
\begin{enumerate}
\item $(x_1, \ldots, x_s)^n : x_{s+1} \subseteq \overline{(x_1, \ldots, x_s)^n} : x_{s+1} = \overline{(x_1, \ldots, x_s)^n},$ where $\overline{I}$ denotes the integral closure of $I$.\footnote{This assertion was stated for quasi-unmixed local ring. However we can reduce to our assumption by using \cite[Proposition 1.6.2]{HS06}.}
\item Suppose $R$ contains a field of prime characteristic $p>0$. Then we have
$$(x_1, \ldots, x_s)^n : x_{s+1} \subseteq ((x_1, \ldots, x_s)^n)^{*} : x_{s+1} = ((x_1, \ldots, x_s)^n)^{*},$$ where $I^{*}$ denotes the tight closure of $I$.
\end{enumerate}
\end{corollary}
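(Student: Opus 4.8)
The plan is to deduce both statements from the inclusion $\frak a(R)^{N}\subseteq\frak c(R)$ supplied by the Main Theorem (with $N=1$ if $d=1$ and $N=2^{d-2}$ if $d\ge 2$), applied not to $\underline x$ itself but to suitably \emph{modified} systems of parameters, together with the classical ``one multiplier'' descriptions of the integral closure and of the tight closure. First I would record the exact consequence of the Main Theorem that is used: by the very definition of $\frak c(R)$, for \emph{every} system of parameters $\underline y=y_1,\dots,y_d$ of $R$, every $t<d$ and every $k\ge 1$,
$$\frak a(R)^{N}\cdot\bigl((y_1,\dots,y_t)^{k}:y_{t+1}\bigr)\subseteq(y_1,\dots,y_t)^{k}.$$
Second, since $R$ is an image of a Cohen--Macaulay local ring, $\dim R/\frak a_i(R)\le i<d$ for every $i<d$, and hence $\dim R/\frak a(R)<d$; as $R$ is equidimensional, every minimal prime $\frak p$ of $R$ satisfies $\dim R/\frak p=d$, so $\frak a(R)^{N}\not\subseteq\frak p$, and by prime avoidance I may fix once and for all an element $c\in\frak a(R)^{N}$ lying in no minimal prime of $R$. (A product of two elements lying in no minimal prime again lies in no minimal prime; this will be used tacitly.)

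For part (1), put $I=(x_1,\dots,x_s)^{n}$; since $I\subseteq\overline I$, the content is the inclusion $\overline I:x_{s+1}\subseteq\overline I$. So let $ux_{s+1}\in\overline I$. By the standard multiplier criterion for integral closure (see \cite{HS06}) there is an element $c_1$, lying in no minimal prime of $R$, such that $c_1u^{m}x_{s+1}^{m}=c_1(ux_{s+1})^{m}\in I^{m}=(x_1,\dots,x_s)^{nm}$ for all $m\gg 0$; equivalently, $c_1u^{m}\in(x_1,\dots,x_s)^{nm}:x_{s+1}^{m}$ for $m\gg 0$. Now $x_1,\dots,x_s,x_{s+1}^{m},x_{s+2},\dots,x_d$ is again a system of parameters of $R$, so applying the displayed inclusion of the first paragraph with $t=s$ and $k=nm$ gives $cc_1u^{m}\in(x_1,\dots,x_s)^{nm}=I^{m}$ for all $m\gg 0$; since $cc_1$ lies in no minimal prime, the converse direction of the same criterion yields $u\in\overline I$.

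Part (2) runs in exactly the same way, with Frobenius powers replacing ordinary powers: keeping $I=(x_1,\dots,x_s)^{n}$, one has $I^{[q]}=(x_1^{q},\dots,x_s^{q})^{n}$ for every $q=p^{e}$, and if $ux_{s+1}\in I^{*}$ then, directly from the definition of tight closure \cite[Chapter~10]{BH98}, there is $c_1$ lying in no minimal prime with $c_1u^{q}x_{s+1}^{q}=c_1(ux_{s+1})^{q}\in I^{[q]}=(x_1^{q},\dots,x_s^{q})^{n}$ for all $q\gg 0$; equivalently $c_1u^{q}\in(x_1^{q},\dots,x_s^{q})^{n}:x_{s+1}^{q}$. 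Since $x_1^{q},\dots,x_d^{q}$ is a system of parameters of $R$, the displayed inclusion with $t=s$ and $k=n$ gives $cc_1u^{q}\in(x_1^{q},\dots,x_s^{q})^{n}=I^{[q]}$ for all $q\gg 0$, whence $u\in I^{*}$, again straight from the definition. The routine facts I am suppressing are that altering a single entry of a system of parameters by a power, or raising every entry to the $q$-th power, still produces a system of parameters, and that $(J^{n})^{[q]}=(J^{[q]})^{n}$ for any ideal $J$ and $q=p^{e}$. I expect the only genuinely delicate point to be the observation that the Main Theorem must be fed the \emph{modified} systems of parameters $x_1,\dots,x_s,x_{s+1}^{m},x_{s+2},\dots,x_d$ (resp.\ $x_1^{q},\dots,x_d^{q}$) rather than $\underline x$ itself: this is what upgrades the crude estimate $cu\in I$ into the power estimates $cc_1u^{m}\in I^{m}$ (resp.\ $cc_1u^{q}\in I^{[q]}$) that the multiplier criteria for $\overline{(\,\cdot\,)}$ and $(\,\cdot\,)^{*}$ demand; with the multiplier $c$ in hand, everything else is bookkeeping.
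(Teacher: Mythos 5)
Your proof is correct and takes essentially the same route as the paper's: fix a multiplier in $\frak a(R)^{N}\subseteq\frak c(R)$ avoiding all minimal primes (possible because $\dim R/\frak a(R)<d$ and $R$ is equidimensional), then apply the resulting colon-annihilation to the modified systems of parameters $x_1,\dots,x_s,x_{s+1}^{m},x_{s+2},\dots,x_d$ (resp.\ $x_1^{q},\dots,x_d^{q}$) so as to pass from $c_1u^{m}\in Q_s^{nm}:x_{s+1}^{m}$ to $cc_1u^{m}\in Q_s^{nm}$, exactly as the paper does implicitly via \cite[Corollaries 6.8.12, 6.2.12]{HS06}. You also usefully spell out part (2), which the paper leaves to the reader, and you correctly isolate the one non-obvious step (feeding the modified system of parameters to the Main Theorem) that makes the power estimates go through.
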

\begin{proof} (1) The first inclusion is trivial. Let $r \in \overline{(x_1, \ldots, x_s)^n} : x_{s+1}$, we have $rx_{s+1} \in \overline{(x_1, \ldots, x_s)^n}$. By \cite[Corollary 6.8.12]{HS06} this condition is equivalent to the condition that we have an element $c \notin P$ for all $P \in \mathrm{minAss}(R)$ such that $c (rx_{s+1})^m \in Q_s^{nm}$ for large enough $m$. Therefore $cr^m  \in Q_s^{nm} : x_{s+1}^m$. Since $R$ is an image of a Cohen-Macaulay local ring, $\dim R/\frak a(R) < d$. By the Main Theorem we can choose $c' \in \frak c(R)$ such that $c' \notin P$ for all $P \in \mathrm{minAss}(R)$, reminding that $R$ is equidimensional. Therefore $(cc') r^m \in Q_s^{nm}$ for all $m$ large enough, and  so $r \in \overline{(x_1, \ldots, x_s)^n}$ by \cite[Corollary 6.8.12]{HS06}.\\
(2) We  recall that an element $x$  is contained in the tight closure $I^*$ of $I$ if we have some $c \notin P$ for all $P \in \mathrm{minAss}(R)$ such that $cx^{p^e} \in I^{[p^e]}$ for all $e \gg 0$, where $I^{[p^e]} = (a^{p^e} \mid a \in I)$ is the $e$-th Frobenius power of $I$, see \cite[Chapter 10]{BH98} for more details about theory of tight closure. We now can repeat the method of (1) to prove (2) with note that $(I^n)^{[p^e]} = (I^{[p^e]})^n$ for all ideal $I$ and all $n, e$.
\end{proof}  
We next prove that every power of a parameter ideal has a parametric decomposition up to tight closure. The result can be proved by using \cite[Section 7]{HH90}.
\begin{corollary}
  Let $(R, \frak m)$ be an equidimensional local ring of dimension $d$ and of prime characteristic $p$ that is a homomorphic image of a Cohen-Macaulay local ring. Let $\underline{x} = x_1, \ldots, x_d$ be a system of parameters of $R$. Then for all $s \le d$ and all $n \ge 1$ we have
 $$\bigcap_{\alpha \in \Lambda_{s,n}} Q_s(\alpha)^{*} = (Q_s^n)^{*}.$$
\end{corollary}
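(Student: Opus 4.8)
The plan is to imitate the proof of part (1) of the preceding corollary, with the ideal $\frak b(R)$ there replaced by the ideal $\frak d(R)$ controlled by part (2) of the Main Theorem, and integral closure replaced by tight closure; the facts about tight closure over a ring that need not be a domain that this requires are the ones collected in \cite[Section 7]{HH90}. The inclusion $(Q_s^n)^{*} \subseteq \bigcap_{\alpha \in \Lambda_{s,n}} Q_s(\alpha)^{*}$ is formal, since $Q_s^n \subseteq Q_s(\alpha)$ for each $\alpha \in \Lambda_{s,n}$ and tight closure preserves containments, so only the reverse inclusion needs an argument.

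For that, fix $r \in \bigcap_{\alpha \in \Lambda_{s,n}} Q_s(\alpha)^{*}$. For each of the finitely many $\alpha \in \Lambda_{s,n}$ pick an element $c_\alpha$ avoiding every minimal prime of $R$ with $c_\alpha r^{q} \in Q_s(\alpha)^{[q]}$ for all $q = p^{e} \gg 0$, and put $c_0 = \prod_{\alpha} c_\alpha$; then $c_0$ avoids every minimal prime and $c_0 r^{q} \in \bigcap_{\alpha \in \Lambda_{s,n}} Q_s(\alpha)^{[q]}$ for all $q \gg 0$. Next I would invoke the elementary identities $Q_s(\alpha)^{[q]} = (x_1^{q\alpha_1}, \ldots, x_s^{q\alpha_s})$ and $(x_1^{q}, \ldots, x_s^{q})^{n} = (Q_s^{n})^{[q]}$: the first shows that $\bigcap_{\alpha} Q_s(\alpha)^{[q]}$ is precisely the intersection $\bigcap_{\alpha \in \Lambda_{s,n}} (y_1^{\alpha_1}, \ldots, y_s^{\alpha_s})$ formed from the system of parameters $\underline{y} = x_1^{q}, \ldots, x_d^{q}$ (powers of a system of parameters again form one) using the same index set $\Lambda_{s,n}$, while the second identifies $(y_1, \ldots, y_s)^{n}$ with $(Q_s^{n})^{[q]}$. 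Applying part (2) of the Main Theorem to the system of parameters $\underline{y}$, with this $s$ and this $n$, therefore gives
$$\frak a(R)^{2^{d-1}} \cdot \bigcap_{\alpha \in \Lambda_{s,n}} Q_s(\alpha)^{[q]} \subseteq (Q_s^{n})^{[q]}.$$
Because $R$ is an equidimensional image of a Cohen-Macaulay local ring, $\dim R/\frak a(R) < d$ while every minimal prime of $R$ has dimension $d$, so by prime avoidance there is $c' \in \frak a(R)^{2^{d-1}} \subseteq \frak d(R)$ outside every minimal prime of $R$. Then $c'c_0$ avoids every minimal prime and $c'c_0\, r^{q} \in (Q_s^{n})^{[q]}$ for all $q \gg 0$, that is, $r \in (Q_s^{n})^{*}$.

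The step needing care — and where \cite[Section 7]{HH90} enters — is the handling of test-type witnesses when $R$ is not a domain: one must know that the $c_\alpha$, the element $c'$, and hence the single witness $c'c_0$, may all be chosen in the complement of the union of the minimal primes of $R$, and the equidimensionality assumption together with $\dim R/\frak a(R) < d$ is exactly what guarantees that $\frak a(R)$, and therefore $\frak d(R)$, meets that complement. The two monomial identities above are routine, as is the observation that $\underline{y}$ is a system of parameters.
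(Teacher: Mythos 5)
Your proof is correct and follows essentially the same route as the paper's: rewrite $\bigcap_{\alpha} Q_s(\alpha)^{[q]}$ as the intersection of parametric ideals built from the system of parameters $x_1^q,\ldots,x_d^q$, apply part (2) of the Main Theorem to that system to push the intersection into $(Q_s^n)^{[q]}$ at the cost of a factor $\frak a(R)^{2^{d-1}}$, and use equidimensionality together with $\dim R/\frak a(R) < d$ to choose the witness $c'$ outside every minimal prime. The only difference is cosmetic — you spell out the choice of a single $c_0$ from the individual $c_\alpha$ and the two monomial identities, which the paper leaves implicit.
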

\begin{proof} Clearly, $\mathrm{RHS} \subseteq \mathrm{LHS}$. Let $r \in \cap_{\alpha \in \Lambda_{s,n}} Q_s(\alpha)^{*}$. Then we can choose $c \notin P$ for all $P \in \mathrm{minAss}(R)$ such that for all $q = p^e \gg 0$ we have
$$cr^q \in \bigcap_{\alpha \in \Lambda_{s,n}} (Q_s(\alpha))^{[q]} = \bigcap_{\alpha \in \Lambda_{s,n}} (Q_s^{[q]}(\alpha)).$$
By the Main Theorem (2) we have 
$$\frak a(R)^{2^{d-1}}\big( \bigcap_{\alpha \in \Lambda_{s,n}} (Q_s^{[q]}(\alpha)) \big) \subseteq (Q_s^{[q]})^n = (Q_s^n)^{[q]}$$
for all $q$. Therefore we can choose $c' \notin P$ for all $P \in \mathrm{minAss}(R)$ such that $(cc') r^q \in (Q_s^n)^{[q]}$ for all $q = p^e\gg 0$. Hence $r \in (Q_s^n)^{*}$. The proof is complete.
\end{proof}
At the time of writing we do not have an answer for the following question.
\begin{question}
Let $(R, \frak m)$ be an equidimensional local ring of dimension $d$ that is a homomorphic image of a Cohen-Macaulay local ring. Let $\underline{x} = x_1, \ldots, x_d$ be a system of parameters of $R$. Then is it true that
 $$\bigcap_{\alpha \in \Lambda_{s,n}} Q_s(\alpha) \subseteq \overline{Q_s^n}$$
 for all $s \le d$ and all $n \ge 1$?
\end{question}
The paper is organized as follows: We recall some notations of this paper in the next section. In Section 3 we prove a technical lemma which plays a key role in study the parametric decomposition. The Main Theorem will be proved in Section 4. We also give some applications of the Main Theorem about the uniform annihilator of local cohomology of $R/Q_s^n$.

\begin{acknowledgement} The author is deeply grateful to the referee for her/his careful reading and many value suggestions. This paper was written while the author visited Vietnam Institute for Advanced Study in Mathematics (VIASM), he would like to thank VIASM for the very kind support and hospitality.
\end{acknowledgement}
\section{Preliminaries}
For simplicity of the argument we will prove our results for the module version. In the rest of this paper, let $(R, \frak m)$ be a local ring of dimension $\dim R$ and $M$ a finitely generated $R$-module of dimension $d$. Let $\underline{x} = x_1, \ldots, x_d$ be a system of parameters of $M$. For $s \le d$ we set $Q_s = (x_1, \ldots, x_s)$ with the convention $Q_0  = (0)$.   
\begin{notation} \rm Let $(R, \frak m)$ be a local ring and $M$ a finitely generated $R$-module of dimension $d$.
\begin{enumerate}
\item For all $i < d$ we set  $\frak a_i(M) =
\mathrm{Ann}H^{i}_\mathfrak{m}(M)$, and $\frak a(M) = \frak
a_0(M) \cdots \frak a_{d-1}(M)$.
\item For all $s < d$ we set
 $$\mathfrak{b}_s(M) = \bigcap_{\underline{x}}
\mathrm{Ann}\big(\frac{Q_sM:x_{s+1}}{Q_sM} \big),$$ where $\underline{x} =
x_1, \ldots, x_d$ runs over all systems of parameters of $M$, and 
$$\frak b(M) = \frak b_0(M) \cap \cdots \cap \frak b_{d-1}(M).$$
\end{enumerate}
\end{notation}
\begin{remark}\label{C3.1.2}\rm
\begin{enumerate}
\item Schenzel \cite[Satz 2.4.5]{Sch82} proved that
$$\mathfrak{a}(M) \subseteq \mathfrak{b}(M) \subseteq \mathfrak{a}_0(M) \cap \cdots \cap \mathfrak{a}_{d-1}(M).$$
In particular $\sqrt{\mathfrak{a}(M)} = \sqrt{\mathfrak{b}(M)}$.
\item If $R$ is a homomorphic image of a Cohen-Macaulay local ring, then $\dim R/\mathfrak{a}_i(M) \leq
i$ for all $i< d$ by \cite[9.6.6]{BS98}.
\end{enumerate}
\end{remark}
We next mention the main objects of this paper.
\begin{notation} 
For all $s < d$ we set
$$\frak c_s(M) = \bigcap_{\underline{x}, n \ge 1} \mathrm{Ann}\big(\frac{Q_s^nM:x_{s+1}}{Q_s^nM} \big),$$
where $\underline{x} =
x_1, \ldots, x_d$ runs over all systems of parameters of $M$, and  
$$\frak c(M) = \frak c_0(M) \cap \cdots \cap \frak c_{d-1}(M).$$
\end{notation}
Clearly $\frak c_i(M) = \frak b_i(M)$ when $i = 0, 1$, and $\frak c_i(M) \subseteq \frak b_i(M)$ in general.\\ 
For the next object we need the notation
$$\Lambda_{s, n} = \{(\alpha_1, \ldots, \alpha_s) \in \mathbb{N}^s \mid \alpha_i \ge 1 \,\text{for all}\, 1\le i \le s, \sum_{i=1}^s \alpha_i = s+n-1 \}$$
for all $s, n \ge 1$. For all $s \le d$ and $\alpha \in \Lambda_{s, n}$, set $Q_s(\alpha) = (x_1^{\alpha_1}, \ldots, x_s^{\alpha_s})$. We have
$$Q_s^n M \subseteq \bigcap_{\alpha \in \Lambda_{s, n}} Q_s(\alpha)M$$
for all $n \ge 1$, and the equalities hold provided $M$ is Cohen-Macaulay \cite[Theorem 2.4]{HRS95} (see also \cite{GS04-1, GS04-2, CT09}). In this case we say that $Q_s^nM$ admit the {parametric decomposition} for all $n \ge 1$. In general, it is natural to consider the following.
\begin{notation}
For every $1 \le s \le d$, we set
$$\frak d_s(M) = \bigcap_{\underline{x}, n \ge 1} \mathrm{Ann} \big(\frac{\cap_{\alpha \in \Lambda_{s, n}} Q_s(\alpha)M}{Q_s^nM} \big),$$
where $\underline{x}$ runs over all systems of parameters of $R$, and
$$\frak d(M) = \frak d_1(M) \cap \cdots \cap \frak d_{d}(M).$$
\end{notation}
It is clear that $\frak d_1(M) = R$. The aim of this paper is to bound both $\frak c(M)$ and $\frak d(M)$ by $\frak b(M)$ (and $\frak a(M)$). We will need the following relation between these ideals.
\begin{lemma}\label{relations ideals} For every $1 \le s  \le d-1$ we have $ \frak b_s(M) \frak d_s(M)  \subseteq \frak c_s(M)$.
\end{lemma}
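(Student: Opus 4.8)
The plan is to prove the inclusion $\frak b_s(R)\frak d_s(R)\subseteq\frak c_s(R)$ by a direct element chase, routing a colon relation for $Q_s^n$ through the ``parametric corners'' $Q_s(\alpha)$. Fix $b\in\frak b_s(R)$ and $e\in\frak d_s(R)$; it is enough to show that for every system of parameters $\underline x=x_1,\dots,x_d$, every $n\ge 1$, and every $r\in Q_s^n:x_{s+1}$ one has $ber\in Q_s^n$. The entry point is that, since $rx_{s+1}\in Q_s^n$ and $Q_s^n\subseteq\bigcap_{\alpha\in\Lambda_{s,n}}Q_s(\alpha)$, we get $r\in\bigcap_{\alpha\in\Lambda_{s,n}}\big(Q_s(\alpha):x_{s+1}\big)$.

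The first main step uses $b$. For each $\alpha=(\alpha_1,\dots,\alpha_s)\in\Lambda_{s,n}$, because $\alpha_i\ge 1$ for all $i$, the tuple $x_1^{\alpha_1},\dots,x_s^{\alpha_s},x_{s+1},\dots,x_d$ has the same radical $\frak m$ as $\underline x$, so it is again a system of parameters of $R$; moreover the ideal playing the role of ``$Q_s$'' for this new system is precisely $Q_s(\alpha)=(x_1^{\alpha_1},\dots,x_s^{\alpha_s})$, with ``$x_{s+1}$'' unchanged. Hence, by the definition of $\frak b_s(R)$, the element $b$ annihilates $\big(Q_s(\alpha):x_{s+1}\big)/Q_s(\alpha)$, so $br\in Q_s(\alpha)$. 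Letting $\alpha$ range over $\Lambda_{s,n}$ gives $br\in\bigcap_{\alpha\in\Lambda_{s,n}}Q_s(\alpha)$.

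The second step uses $e$: since $e\in\frak d_s(R)$ annihilates $\big(\bigcap_{\alpha\in\Lambda_{s,n}}Q_s(\alpha)\big)/Q_s^n$, we conclude $ber=e(br)\in Q_s^n$, which is exactly what we needed. As $\underline x$, $n$ and $r$ were arbitrary, this shows $be\in\frak c_s(R)$.

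I do not expect a genuine obstacle here; the only point requiring a moment's care is the passage from $\underline x$ to the ``powered'' system $x_1^{\alpha_1},\dots,x_s^{\alpha_s},x_{s+1},\dots,x_d$ together with the observation that its truncation data $(Q_s(\alpha),x_{s+1})$ is literally of the form appearing in the definition of $\frak b_s(R)$ — which is immediate since every $\alpha_i\ge 1$. Conceptually, this lemma is what will let us bound $\frak c_s(R)$ by bounding $\frak b_s(R)$ and $\frak d_s(R)$ separately, so it is natural to record it before the parametric-decomposition analysis.
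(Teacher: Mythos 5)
Your proof is correct and follows essentially the same route as the paper: pass from $Q_s^n:x_{s+1}$ into $\bigcap_\alpha(Q_s(\alpha):x_{s+1})$, annihilate each colon quotient with $\frak b_s(R)$ (noting, as you helpfully make explicit, that $x_1^{\alpha_1},\dots,x_s^{\alpha_s},x_{s+1},\dots,x_d$ is again a system of parameters), then finish with $\frak d_s(R)$. The paper writes the same chain of inclusions at the level of ideals rather than elements, but the argument is the same.
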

\begin{proof} For all $n \ge 1$ we have 
$$Q_s^nM : x_{s+1} \subseteq \bigcap_{\alpha \in \Lambda_{s, n}} \big(Q_s(\alpha)M : x_{s+1} \big).$$
By the definition of $\frak b_s(M)$ we have $\frak b_s(M)(Q_s(\alpha)M : x_{s+1}) \subseteq Q_s(\alpha)M$ for all $\alpha \in \Lambda_{s,n}$. Therefore
\begin{eqnarray*}
\frak b_s(M) \frak d_s(M) (Q_s^nM : x_{s+1}) &\subseteq &  \frak d_s(M) \frak b_s(M) \big( \bigcap_{\alpha \in \Lambda_{s, n}} (Q_s(\alpha)M : x_{s+1}) \big)\\
 &\subseteq &  \frak d_s(M) \big( \bigcap_{\alpha \in \Lambda_{s, n}} \frak b_s(M) (Q_s(\alpha)M : x_{s+1}) \big)\\
&\subseteq & \frak d_s(M) \big( \bigcap_{\alpha \in \Lambda_{s, n}} Q_s(\alpha)M\big)\\
&\subseteq & Q_s^nM.
\end{eqnarray*}
The proof is complete.
\end{proof}

\section{A technical Lemma}
Let $M$ be a finitely generated $R$-module, $I$ an ideal, and $x$ an element of $R$. We will use the notation $IM : x^{\infty} = \cup_{n \ge 1} (IM: x^n)$, and the conventions $I^0 = R$ and $x^0 = 1$.
 \begin{lemma}\label{intersection} Let $M$ be a finitely generated $R$-module, $I$ an ideal, and $x$ an element of $R$. Then
 \begin{enumerate}
 \item For all $n \ge m \ge 0$ we have $x^n M \cap x^m(IM: x^{\infty}) = x^n(IM : x^{\infty})$.
 \item For all $n + 1 \ge \alpha > m \ge 1$ we have
 $$x^mM \cap \big( \sum_{i = 0}^{m-2} x^i (I^{n-i}M: x^{\infty}) + x^{m-1}(I^{n+1 - \alpha}M : x^{\infty}) \big) = x^m(I^{n+1 - \alpha}M : x^{\infty}).$$
 \end{enumerate}
 
 \end{lemma}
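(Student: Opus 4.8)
The plan is to prove (1) by a direct element chase, and then to deduce (2) from (1) by observing that the entire sum inside the intersection in (2) is already swallowed by the single colon ideal $I^{n+1-\alpha}:x^\infty$.

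\emph{Part (1).} The inclusion $\supseteq$ is formal: $x^n(I:x^\infty)\subseteq (x^n)$ is clear, and since $I:x^\infty$ is an ideal we have $x^n(I:x^\infty)=x^m\big(x^{n-m}(I:x^\infty)\big)\subseteq x^m(I:x^\infty)$. For $\subseteq$, take $y$ in the intersection and write $y=x^na=x^mb$ with $b\in I:x^\infty$; choose $k\ge 0$ with $x^kb\in I$. Multiplying $x^na=x^mb$ by $x^k$ gives $x^{n+k}a=x^m(x^kb)\in x^mI\subseteq I$, hence $a\in I:x^{n+k}\subseteq I:x^\infty$, and therefore $y=x^na\in x^n(I:x^\infty)$.

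\emph{Part (2).} Set $K:=I^{n+1-\alpha}:x^\infty$, which makes sense because $\alpha\le n+1$ and $I^0=R$, and let $J$ denote the sum appearing inside the intersection. The crucial point is that $J\subseteq K$: for each $i$ with $0\le i\le m-2$ (the range being empty when $m=1$) the hypothesis $\alpha>m$ gives $n-i\ge n-(m-2)>n-m\ge n+1-\alpha$, so $I^{n-i}\subseteq I^{n+1-\alpha}$ and hence $x^i(I^{n-i}:x^\infty)\subseteq I^{n-i}:x^\infty\subseteq K$; the last summand $x^{m-1}(I^{n+1-\alpha}:x^\infty)=x^{m-1}K$ lies in $K$ as well. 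Therefore $(x^m)\cap J\subseteq (x^m)\cap K$, and applying part (1) with the ideal $I^{n+1-\alpha}$ in place of $I$ and with $(n,m)$ replaced by $(m,0)$ yields $(x^m)\cap K=x^mK$. The reverse inclusion is immediate: $x^mK\subseteq (x^m)$ trivially, and $x^mK=x^{m-1}(xK)\subseteq x^{m-1}K\subseteq J$ since $xK\subseteq K$ and $x^{m-1}K$ is exactly the last summand of $J$; thus $x^mK\subseteq (x^m)\cap J$, and equality follows.

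I do not expect a genuine obstacle; the only thing to watch is the index bookkeeping forced by $n+1\ge\alpha>m\ge 1$, which simultaneously guarantees that the exponents $n-i$ and $n+1-\alpha$ are nonnegative (so the relevant powers of $I$ are honest ideals) and that $n-i\ge n+1-\alpha$ throughout the summation range. All the real content is carried by the element chase in part (1) together with the monotonicity $I^a\subseteq I^b$ for $a\ge b$.
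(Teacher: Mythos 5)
Your proof is correct and follows essentially the same route as the paper: in both cases part (2) reduces to the observation that the whole sum lies in $I^{n+1-\alpha}:x^\infty$ (since $n-i > n+1-\alpha$ for $i\le m-2$) together with an application of part (1). For part (1) the paper invokes the colon identity $(x^n)\cap(I:x^\infty) = x^n\big((I:x^\infty):x^n\big) = x^n(I:x^\infty)$ rather than running the element chase, but your chase is just the proof of that identity unwound, so there is no substantive difference.
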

\begin{proof}
(1) It is clear that $x^n(IM : x^{\infty}) \subseteq x^nM \cap x^m(IM : x^{\infty})$. Conversely, we have
$$x^nM \cap x^m(IM : x^{\infty}) \subseteq x^n M \cap (IM : x^{\infty}) = x^n ((IM : x^{\infty}):x^n) = x^n (IM : x^{\infty}).$$
(2) Similarly, it is enough to show $\mathrm{LHS} \subseteq \mathrm{RHS}$. Since $\alpha > m$ we have $n+1 - \alpha < n-i$ for all $i = 0, \ldots, m-2$. Therefore 
$$\sum_{i = 0}^{m-2} x^i (I^{n-i}M : x^{\infty}) + x^{m-1}(I^{n+1 - \alpha}M : x^{\infty})  \subseteq I^{n+1 - \alpha}M : x^{\infty}.$$
Hence
$$\mathrm{LHS} \subseteq x^mM \cap (I^{n+1 - \alpha}M : x^{\infty}) = x^m(I^{n+1 - \alpha}M : x^{\infty}).$$
The proof is complete.
\end{proof} 
The next lemma is the key ingredient of the proof of the main result. 
\begin{lemma} \label{key lemma} 
 Let $M$ be a finitely generated $R$-module, $I$ an ideal, and $x$ an element of $R$. Then for all $n \ge 1$ we have
 $$\bigcap_{\alpha =1}^{n} (x^{\alpha}M + I^{n+1 - \alpha}M : x^{\infty}) = \sum_{\alpha = 0}^n x^{\alpha}(I^{n - \alpha}M : x^{\infty}).$$
 \end{lemma}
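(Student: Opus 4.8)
The plan is to establish the two inclusions separately. Write $S_n$ for the right-hand side $\sum_{\alpha=0}^n x^{\alpha}(I^{n-\alpha}:x^{\infty})$ and $T_n$ for the left-hand side $\bigcap_{\alpha=1}^{n}(x^{\alpha}, I^{n+1-\alpha}:x^{\infty})$. Two observations drive everything: first, the colon ideals $I^{n+1-\alpha}:x^{\infty}$ increase with $\alpha$, since the powers of $I$ decrease; second, pulling one factor of $x$ out of the summands with $\alpha\ge 1$ gives the recursion $S_n = (I^n:x^{\infty}) + xS_{n-1}$. I would prove $S_n\subseteq T_n$ directly, and $T_n\subseteq S_n$ by induction on $n$ (for all ideals $I$ simultaneously).

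For $S_n\subseteq T_n$ it suffices to check that a generator $x^{\alpha}c$ with $c\in I^{n-\alpha}:x^{\infty}$ lies in $(x^{\beta}, I^{n+1-\beta}:x^{\infty})$ for every $\beta\in\{1,\dots,n\}$: if $\alpha\ge\beta$ then $x^{\alpha}c\in(x^{\beta})$, while if $\alpha<\beta$ then $n-\alpha\ge n+1-\beta$, so by monotonicity $c\in I^{n+1-\beta}:x^{\infty}$. Either way we are done.

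For $T_n\subseteq S_n$ the case $n=1$ is the identity $(x, I:x^{\infty}) = (I:x^{\infty}) + (x)$. For the inductive step, take $a\in T_n$. The membership coming from $\alpha=1$ lets me write $a = xb + c$ with $c\in I^n:x^{\infty}$; since $I^n:x^{\infty}$ is the $\alpha=0$ summand of $S_n$ and $S_n\subseteq T_n$ is already known, I may subtract $c$ and assume $a = xb\in T_n$. By the recursion $S_n=(I^n:x^{\infty})+xS_{n-1}$ it is enough to show $b\in S_{n-1}$, and by the induction hypothesis $S_{n-1}=T_{n-1}$, so I only need $b\in(x^{\alpha}, I^{n-\alpha}:x^{\infty})$ for each $\alpha\in\{1,\dots,n-1\}$. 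Fix such an $\alpha$. From $xb\in T_n\subseteq(x^{\alpha+1}, I^{n-\alpha}:x^{\infty})$ write $xb = x^{\alpha+1}u + v$ with $v\in I^{n-\alpha}:x^{\infty}$; then $v = x(b-x^{\alpha}u)\in (x)\cap(I^{n-\alpha}:x^{\infty})$, which by Lemma~\ref{intersection}(1) equals $x(I^{n-\alpha}:x^{\infty})$. So $v = xw$ with $w\in I^{n-\alpha}:x^{\infty}$, and $x(b-x^{\alpha}u-w)=0$, i.e. $b - x^{\alpha}u - w\in 0:x$. Since $0:x\subseteq I^{n-\alpha}:x^{\infty}$, this gives $b\in(x^{\alpha}, I^{n-\alpha}:x^{\infty})$, as required.

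The one place that needs care is the cancellation of $x$ in the last step: from $x(b-x^{\alpha}u)=xw$ one cannot conclude $b=x^{\alpha}u+w$, only that the difference is annihilated by $x$. The point is that this torsion term lands in $0:x$, which is contained in each colon ideal $I^{n-\alpha}:x^{\infty}$ that occurs (as $xc=0$ forces $c\in I^{n-\alpha}:x$), so it is absorbed and causes no loss. This is precisely the role of Lemma~\ref{intersection}(1); part (2) of that lemma will not be needed for the present statement.
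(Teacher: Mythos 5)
Your proof is correct, and it takes a genuinely different route from the paper's. The paper's proof keeps $n$ fixed and does a telescoping computation: by induction on an auxiliary index $m$ running from $1$ to $n$, it rewrites the entire intersection as
$\bigcap_{\alpha=m}^{n}\bigl(x^{\alpha},\sum_{i=0}^{m-2}x^i(I^{n-i}:x^{\infty}),\,x^{m-1}(I^{n+1-\alpha}:x^{\infty})\bigr)$,
pushing one more summand inside at each step by combining the $\alpha=m$ factor with the rest via the modular law and Lemma~\ref{intersection}(2); taking $m=n$ collapses the intersection to a single term, which is the right-hand side. You instead induct on $n$, reduce via the recursion $S_n=(I^n:x^{\infty})+xS_{n-1}$ to showing that the ``$x$-divisible part'' $b$ of an element of $T_n$ lies in $T_{n-1}$, and handle each membership condition by element chasing, using only Lemma~\ref{intersection}(1) to cancel the factor of $x$ up to a harmless $(0:x)$-term. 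Your approach is more elementary in two respects: it avoids Lemma~\ref{intersection}(2) entirely (so that part of the technical lemma becomes unnecessary for this statement), and it replaces the ideal-theoretic modular-law bookkeeping by a concrete element argument that isolates exactly where torsion enters and why it is absorbed. The paper's version is more compact when written formally and proves a sharper intermediate identity (the mixed form for each $m$), which may be reusable elsewhere, but for the stated lemma your argument is a clean and arguably more transparent alternative.
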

\begin{proof}
We will prove by induction the following equalities 
$$\bigcap_{\alpha =1}^{n} (x^{\alpha}M + I^{n+1 - \alpha}M : x^{\infty}) = \bigcap_{\alpha = m}^{n} \big(x^{\alpha}M + \sum_{i = 0}^{m-2} x^i(I^{n-i}M : x^{\infty})+ x^{m-1}(I^{n+1 - \alpha}M : x^{\infty}) \big)$$
 for all $m = 1, \ldots, n$. Notice that if we apply $m=n$, we will obtain the desired. 
There is nothing to do in the case $m = 1$. Suppose we have the equality for some $m$, $1 \le m \le n-1$. Then\\

\noindent $\mathrm{LHS}$ 
\begin{eqnarray*}
&=& \bigcap_{\alpha = m+1}^{n} \bigg[\big(x^mM + \sum_{i = 0}^{m-1} x^i(I^{n-i}M : x^{\infty})\big) \cap \big(x^{\alpha}M + \sum_{i = 0}^{m-2} x^i(I^{n-i}M : x^{\infty}) + x^{m-1}(I^{n+1 - \alpha}M : x^{\infty}) \big)\bigg]\\
&=& \bigcap_{\alpha = m+1}^{n} \bigg[x^{\alpha}M + \sum_{i = 0}^{m-1} x^i(I^{n-i}M : x^{\infty}) + x^mM \cap  \big( \sum_{i = 0}^{m-2} x^i(I^{n-i}M : x^{\infty}) + x^{m-1}(I^{n+1 - \alpha}M : x^{\infty}) \big) \bigg]\\
&=& \bigcap_{\alpha = m+1}^{n} \big(x^{\alpha}M + \sum_{i = 0}^{m-1} x^i(I^{n-i}M : x^{\infty}) + x^m (I^{n+1 - \alpha}M : x^{\infty}) \big). 
\end{eqnarray*}
The last equality follows from Lemma \ref{intersection}(2). The proof is complete.
\end{proof} 
\begin{remark}\label{new proof HSR} The above Lemma gives us a direct proof for the fact 
$$Q_s^nM = \bigcap_{\alpha \in \Lambda_{s,n}} Q_s(\alpha)M$$ 
provided $x_1, \ldots, x_s$ is a regular sequence on $M$. Indeed, we can assume the ring is local and will proceed by induction on $s$. The case $s = 1$ is obvious. For $s >1$ we have
$$\bigcap_{\alpha \in \Lambda_{s,n}} Q_s(\alpha)M = \bigcap_{\alpha_s = 1}^n \big( \bigcap_{\alpha' \in \Lambda_{s-1,n+1 - \alpha_s}} (Q_{s-1}(\alpha')M + x_{s}^{\alpha_s}M) \big).$$
Applying the inductive hypothesis for the regular seuqence $x_1, \ldots,x_{s-1}$ on $M/x_s^{\alpha_s}M$ we have
$$\bigcap_{\alpha' \in \Lambda_{s-1,n+1 - \alpha_s}} Q_{s-1}(\alpha')(M/x_{s}^{\alpha_s}M) = Q_{s-1}^{n+1 - \alpha_s}(M/x_s^{\alpha_s}M).$$
Thus
$$\bigcap_{\alpha' \in \Lambda_{s-1,n+1 - \alpha_s}} (Q_{s-1}(\alpha')M+ x_{s}^{\alpha_s}M) = Q_{s-1}^{n+1 - \alpha_s}M +x_s^{\alpha_s}M$$
for all $\alpha_s = 1, \ldots, n$. Hence
$$\bigcap_{\alpha \in \Lambda_{s,n}} Q_s(\alpha)M = \bigcap_{\alpha_s = 1}^n (x_s^{\alpha_s}M + Q_{s-1}^{n+1 - \alpha_s}M).$$
 Now since $x_s$ is a regular element of $M/Q_{s-1}^mM$ for all $m \ge 1$ we have
 \begin{eqnarray*}
 \bigcap_{\alpha \in \Lambda_{s,n}} Q_s(\alpha)M &=& \bigcap_{\alpha_s = 1}^n (x_s^{\alpha_s}M + Q_{s-1}^{n+1 - \alpha_s}M: x_s^{\infty})\\
 &=& \sum_{\alpha = 0}^n x_s^{\alpha_s}(Q_{s-1}^{n - \alpha_s}M:x_s^{\infty}) \quad (\text{By Lemma}\,\, \ref{key lemma})\\
 &=&\sum_{\alpha = 0}^n x_s^{\alpha_s}Q_{s-1}^{n - \alpha_s}M\\
 &=& Q_s^nM.
 \end{eqnarray*}
 \end{remark}
\section{Proof of the main result}
We start with the following.
\begin{remark}\label{quotient} Let $(R, \frak m)$ be a local ring and $M$ a finitely generated $R$-module of dimension $d>0$. Let $x_1$ be a parameter element of $M$. Then we have
$$\frak b(M/x_1M) = \bigcap_{\underline{y}, i \le d-1} \mathrm{Ann}\big(\frac{(x_1, y_2, \ldots, y_i)M : y_{i+1}}{(x_1, y_2, \ldots, y_i)M} \big),$$
where $\underline{y} = y_2, \ldots, y_d$ runs over all systems of parameters of $M/x_1M$. Moreover, $\frak b(M) \subseteq \frak b(M/x_1M)$ since for every system of parameters $y_2, \ldots, y_d$ of $M/x_1M$ we have $x_1, y_2, \ldots, y_d$ is a system of parameters of $R$. More general, for every part of system of parameters $x_1,\ldots,x_i, i<d,$ of $M$ we have $\frak b(M) \subseteq \frak b(M/(x_1, \ldots,x_i)M)$.  
\end{remark}
We prove the main result of this section which immediately implies the Main Theorem introduced in the introduction.
\begin{theorem}\label{MT} Let $(R, \frak m)$ be a local ring and $M$ a finitely generated $R$-module of dimension $d>0$. Then
\begin{enumerate}
\item We have $\frak b(M)^{2^{s-1}} \subseteq \frak c_s(M)$ for all $s = 1, \ldots, d-1$, and $\frak b(M) \subseteq \frak c_0(M)$.
\item We have $\frak b(M)^{2^{s-1}-1} \subseteq \frak d_s(M)$ for all $s = 1, \ldots, d$.
\end{enumerate}
In particular for all systems of parameters $x_1, \ldots,x_d$ of $M$, and for all $n \ge 1$ we have
$$\frak a(M)^{2^{d-2}}\big((x_1, \ldots,x_s)^nM:x_{s+1}\big) \subseteq (x_1, \ldots,x_s)^nM$$
for all $1 \le s \le d-1$, and 
$$\frak a(M)^{2^{d-1}}\big(\bigcap_{\alpha \in \Lambda_{s,n}}Q_s(\alpha)M\big)\subseteq (x_1, \ldots,x_s)^nM$$
for all $s \le d$.
\end{theorem}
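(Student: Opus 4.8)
The plan is to establish (1) and (2) together by induction on $s$ and then read off the two displayed inclusions at the end. Those last two reduce at once to (1) and (2) via Schenzel's containment $\frak a(R)\subseteq\frak b(R)$ from Remark~\ref{C3.1.2}: for $s\le d-1$ one gets $\frak a(R)^{2^{d-2}}\subseteq\frak b(R)^{2^{d-2}}\subseteq\frak b(R)^{2^{s-1}}\subseteq\frak c_s(R)$, and for $s\le d$ one gets $\frak a(R)^{2^{d-1}}\subseteq\frak b(R)^{2^{s-1}-1}\subseteq\frak d_s(R)$ (when $d=1$ there is nothing beyond $\frak c(R)=\frak c_0(R)=\frak b_0(R)\supseteq\frak a(R)$). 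Moreover (1) itself follows from (2): since $\frak b(R)\subseteq\frak b_s(R)$, Lemma~\ref{relations ideals} gives $\frak b(R)^{2^{s-1}}=\frak b(R)\cdot\frak b(R)^{2^{s-1}-1}\subseteq\frak b_s(R)\,\frak d_s(R)\subseteq\frak c_s(R)$ for $1\le s\le d-1$, while $\frak b(R)\subseteq\frak b_0(R)=\frak c_0(R)$. Thus the whole theorem comes down to proving, for every system of parameters of $R$, every $s\ge 1$ and every $n\ge 1$,
$$\frak b(R)^{2^{s-1}-1}\Big(\bigcap_{\alpha\in\Lambda_{s,n}}Q_s(\alpha)\Big)\subseteq Q_s^n,$$
which I would prove by induction on $s$; the base $s=1$ is vacuous since $\Lambda_{1,n}=\{(n)\}$ and the exponent is $0$.

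For the step, fix $s\ge 2$. Writing $\alpha\in\Lambda_{s,n}$ as $(\alpha',\alpha_s)$ with $\alpha'\in\Lambda_{s-1,\,n+1-\alpha_s}$ and $1\le\alpha_s\le n$, one has
$$\bigcap_{\alpha\in\Lambda_{s,n}}Q_s(\alpha)=\bigcap_{k=1}^{n}\ \bigcap_{\alpha'\in\Lambda_{s-1,\,n+1-k}}\big(Q_{s-1}(\alpha'),x_s^{k}\big),$$
and for each $k$ the inner intersection is the preimage in $R$ of $\bigcap_{\alpha'}\overline{Q_{s-1}(\alpha')}$ under $R\to\overline R:=R/(x_s^{k})$. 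In $\overline R$ the images $\overline x_1,\dots,\overline x_{s-1}$ are part of a system of parameters, and $x_s^{k}$ is part of one of $R$, so Lemma~\ref{quotient} yields $\frak b(R)\subseteq\frak b(\overline R)$; applying the inductive hypothesis for $\frak d_{s-1}$ inside $\overline R$ and pulling back gives $\frak b(R)^{2^{s-2}-1}\big(\bigcap_{\alpha'}(Q_{s-1}(\alpha'),x_s^{k})\big)\subseteq\big(Q_{s-1}^{\,n+1-k},x_s^{k}\big)$. Intersecting over $k$,
$$\frak b(R)^{2^{s-2}-1}\Big(\bigcap_{\alpha\in\Lambda_{s,n}}Q_s(\alpha)\Big)\subseteq\bigcap_{k=1}^{n}\big(x_s^{k},\,Q_{s-1}^{\,n+1-k}\big)\subseteq\bigcap_{k=1}^{n}\big(x_s^{k},\,Q_{s-1}^{\,n+1-k}:x_s^{\infty}\big)=\sum_{\alpha=0}^{n}x_s^{\alpha}\big(Q_{s-1}^{\,n-\alpha}:x_s^{\infty}\big),$$
the last equality being Lemma~\ref{key lemma} with $I=Q_{s-1}$ and $x=x_s$.

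It now suffices to prove the uniform bound $\frak b(R)^{2^{s-2}}\big(Q_{s-1}^{m}:x_s^{\infty}\big)\subseteq Q_{s-1}^{m}$ for all $m\ge 0$: multiplying the previous display by $\frak b(R)^{2^{s-2}}$ then lands inside $\sum_{\alpha=0}^{n}x_s^{\alpha}Q_{s-1}^{\,n-\alpha}=Q_s^n$, and the exponent used is $(2^{s-2}-1)+2^{s-2}=2^{s-1}-1$. For $m=0$ this is trivial (the convention $I^{0}=R$). For $m\ge 1$ I would argue: $Q_{s-1}^{m}\subseteq\bigcap_{\beta\in\Lambda_{s-1,m}}Q_{s-1}(\beta)$, hence $Q_{s-1}^{m}:x_s^{\infty}\subseteq\bigcap_{\beta}\big(Q_{s-1}(\beta):x_s^{\infty}\big)$; for each $\beta$ the sequence $x_1^{\beta_1},\dots,x_{s-1}^{\beta_{s-1}},x_s^{j}$ is again part of a system of parameters for every $j\ge 1$, so the very definition of $\frak b_{s-1}(R)$ forces $\frak b(R)\big(Q_{s-1}(\beta):x_s^{j}\big)\subseteq Q_{s-1}(\beta)$ for all $j$, whence $\frak b(R)\big(Q_{s-1}(\beta):x_s^{\infty}\big)\subseteq Q_{s-1}(\beta)$; intersecting over $\beta$ gives $\frak b(R)\big(Q_{s-1}^{m}:x_s^{\infty}\big)\subseteq\bigcap_{\beta}Q_{s-1}(\beta)$, and one more factor $\frak b(R)^{2^{s-2}-1}$ — the inductive hypothesis for $\frak d_{s-1}$, now applied in $R$ itself — carries this into $Q_{s-1}^{m}$.

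The crux, I expect, is exactly this uniform control of the colons $Q_{s-1}^{m}:x_s^{\infty}$. Iterating the single-power bound $\frak b(R)\big(Q_{s-1}^{m}:x_s\big)\subseteq Q_{s-1}^{m}$ only produces an exponent growing with the colon-exponent, hence with $n$, and so is useless for a uniform annihilator; the escape is that a power of a parameter is again a parameter, so one application of the defining property of $\frak b$ clears the entire $x_s^{\infty}$-torsion of a genuine parameter ideal $Q_{s-1}(\beta)$ in a single stroke, after which $\frak d_{s-1}$ interpolates between the $Q_{s-1}(\beta)$'s and the power $Q_{s-1}^{m}$. The remaining points are bookkeeping: checking that the exponents double exactly as stated, and handling the degenerate indices ($s-1=0$, or $n-\alpha=0$) hidden in the conventions $I^{0}=R$, $x^{0}=1$.
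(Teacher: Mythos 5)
Your proof is correct and follows essentially the same route as the paper: the same peeling-off decomposition of $\Lambda_{s,n}$ along the last variable, the same reduction through $\overline R=R/(x_s^k)$ via Lemma~\ref{quotient}, the same invocation of Lemma~\ref{key lemma} to turn the intersection $\bigcap_k\big(x_s^k,Q_{s-1}^{n+1-k}:x_s^\infty\big)$ into the sum $\sum_\alpha x_s^\alpha(Q_{s-1}^{n-\alpha}:x_s^\infty)$, and the same exponent bookkeeping yielding $2^{s-1}-1$. The only difference is organizational: the paper carries $\frak b(R)^{2^{s-1}}\subseteq\frak c_s(R)$ as a parallel inductive hypothesis (deduced from the $\frak d_s$-statement via Lemma~\ref{relations ideals}), whereas you prove the $\frak d$-statement alone and inline the $\frak c$-control as the explicit colon bound $\frak b(R)^{2^{s-2}}(Q_{s-1}^m:x_s^\infty)\subseteq Q_{s-1}^m$, making the ``powers of parameters are again parameters'' observation explicit where the paper uses it tacitly.
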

\begin{proof} By the definition we have $\frak b(M) \subseteq \frak c_0(M) \cap \frak c_1(M)$ and $\frak d_1(M) = R$. We will prove both (1) and (2) by induction on $s$. The case $s = 1$ was done. By Lemma \ref{relations ideals} we need only to prove that if $\frak b(M)^{2^{s-1}-1} \subseteq \frak d_s(M)$ and $\frak b(M)^{2^{s-1}} \subseteq \frak c_s(M)$ for some $s < d$, then $\frak b(M)^{2^{s}-1} \subseteq \frak d_{s+1}(M)$. For all systems of parameters $x_1, \ldots, x_d$ and all $n \ge 1$ we have  
$$\frak b(M)^{2^{s-1}-1} \big(\bigcap_{\alpha \in \Lambda_{s+1,n}} Q_{s+1}(\alpha)M\big) \subseteq \bigcap_{\alpha_{s+1} = 1}^n  \frak b(M)^{2^{s-1}-1}\bigcap_{\alpha' \in \Lambda_{s,n+1 - \alpha_{s+1}}} (Q_{s}(\alpha')M + x_{s+1}^{\alpha_{s+1}}M) \big).$$
By Remark \ref{quotient} we have $\frak b(M) \subseteq \frak b(M/x_{s+1}^{\alpha_{s+1}}M)$ for all $1 \le \alpha_{s+1} \le n$, with 
$$\frak b(M/x_{s+1}^{\alpha_{s+1}}M) = \bigcap_{\underline{y}, i \le d-2} \mathrm{Ann}\big(\frac{(x_{s+1}^{\alpha_{s+1}}, y_1, \ldots, y_i)M : y_{i+1}}{(x_{s+1}^{\alpha_{s+1}}, y_1, \ldots, y_i)M} \big).$$
Similarly we note that
$$\frak d_s(M/x_{s+1}^{\alpha_{s+1}}M) = \bigcap_{\underline{y}, n \ge 1} \mathrm{Ann} \big(\frac{\cap_{\alpha \in \Lambda_{s, n}} Q_s(\alpha)M/x_{s+1}^{\alpha_{s+1}}M}{Q_s^n \, M/x_{s+1}^{\alpha_{s+1}}M} \big),$$
where $\underline{y} = y_1, \ldots, y_{d-1}$ runs over all system of parameters of $M/x_{s+1}^{\alpha_{s+1}}M$. By using the induction for $M/x_{s+1}^{\alpha_{s+1}}M, 1 \le \alpha_{s+1} \le n$ we have 
$$ \frak b(M)^{2^{s-1}-1} \subseteq \frak b(M/x_{s+1}^{\alpha_{s+1}}M)^{2^{s-1}-1} \subseteq \frak d_{s} (M/x_{s+1}^{\alpha_{s+1}}M),$$
the least inequality is just $\frak b(M)^{2^{s-1}-1} \subseteq \frak d_s(M)$ but we apply for $M/x_{s+1}^{\alpha_{s+1}}M$. 
 Therefore 
$$\frak b(M)^{2^{s-1}-1}\big( \bigcap_{\alpha \in \Lambda_{s+1,n}} Q_{s+1}(\alpha)M \big) \subseteq \bigcap_{\alpha_{s+1} = 1}^n (Q_{s}^{n+1-\alpha_{s+1}}M + x_{s+1}^{\alpha_{s+1}}M).$$
Hence
\begin{eqnarray*}
\frak b(M)^{2^{s}-1}\big( \bigcap_{\alpha \in \Lambda_{s+1,n}} Q_{s+1}(\alpha)M \big) &\subseteq& \frak b(M)^{2^{s-1}}\big( \bigcap_{\alpha_{s+1} = 1}^n (Q_{s}^{n+1-\alpha_{s+1}}M + x_{s+1}^{\alpha_{s+1}}M) \big)\\
&\subseteq& \frak b(M)^{2^{s-1}} \big(\bigcap_{\alpha_{s+1} = 1}^n (x_{s+1}^{\alpha_{s+1}}M + Q_{s}^{n+1-\alpha_{s+1}}M: x_{s+1}^{\infty}))\\
&=& \frak b(M)^{2^{s-1}} \sum_{\alpha_{s+1} = 0}^n x_{s+1}^{\alpha_{s+1}}(Q_{s}^{n-\alpha_{s+1}}M: x_{s+1}^{\infty}) \quad (\text{By Lemma}\, \ref{key lemma})\\
&\subseteq& \sum_{\alpha_{s+1} = 0}^n x_{s+1}^{\alpha_{s+1}}Q_{s}^{n-\alpha_{s+1}}M \quad  \quad \quad (\text{Since }\,\frak b(M)^{2^{s-1}} \subseteq \frak c_s(M) )\\
&=& Q_{s+1}^nM.
\end{eqnarray*}
We finish the inductive proof. The last claims are easy since $\frak a(M) \subseteq \frak b(M)$. Hence the proof is complete.
\end{proof}
We next give some applications of Theorem \ref{MT} to annihilator of local cohomology of quotient rings $M/Q_s^nM$.
\begin{corollary} Let $(R, \frak m)$ be a local ring and $M$ a finitely generated $R$-module of dimension $d>0$. Let $\underline{x} = x_1, \ldots, x_d$ be a system of parameters of $M$. Then for every $1 \le s \le d-1$ we have $\frak b(M)^{2^{s-1}} \subseteq \frak b(M/Q_s^nM)$ for all $n \ge 1$.
\end{corollary}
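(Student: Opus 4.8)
The plan is to reduce, via Lemma~\ref{quotient} and Theorem~\ref{MT}(1), to the already-proved inclusion $\frak b(\bar R)^{2^{s-1}}\subseteq\frak c_s(\bar R)$ for a suitable quotient $\bar R$ of $R$. First I would unwind $\frak b(R/Q_s^n)$. Since $\sqrt{Q_s^n}=\sqrt{Q_s}$ we have $\dim R/Q_s^n=\dim R/Q_s=d-s$, so a typical generator appearing in the defining intersection of $\frak b(R/Q_s^n)$ is the annihilator of $\big((\bar y_1,\ldots,\bar y_j):\bar y_{j+1}\big)/(\bar y_1,\ldots,\bar y_j)$, where $\bar y_1,\ldots,\bar y_{d-s}$ is a system of parameters of $R/Q_s^n$ and $0\le j\le d-s-1$. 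Fix lifts $y_1,\ldots,y_{d-s}\in R$. Using $\sqrt{Q_s^n}=\sqrt{Q_s}$ again, $x_1,\ldots,x_s,y_1,\ldots,y_{d-s}$ is a system of parameters of $R$; hence $y_1,\ldots,y_j$ is part of a system of parameters of $R$, and, writing $\bar R:=R/(y_1,\ldots,y_j)$, the images $\bar x_1,\ldots,\bar x_s,\bar y_{j+1},\ldots,\bar y_{d-s}$ form a system of parameters of $\bar R$, with $\dim\bar R=d-j\ge s+1$.

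Next I would chase one relation. Let $r\in R$ satisfy $r\,y_{j+1}\in(Q_s^n,y_1,\ldots,y_j)$; in $\bar R$ this says $\bar r\,\bar y_{j+1}\in(\bar x_1,\ldots,\bar x_s)^n=:\bar Q_s^n$, i.e. $\bar r\in\bar Q_s^n:\bar y_{j+1}$. Since $1\le s\le\dim\bar R-1$, Theorem~\ref{MT}(1) applied to $\bar R$ gives $\frak b(\bar R)^{2^{s-1}}\subseteq\frak c_s(\bar R)$; evaluating $\frak c_s(\bar R)$ on the system of parameters $\bar x_1,\ldots,\bar x_s,\bar y_{j+1},\ldots$ yields $\frak b(\bar R)^{2^{s-1}}\big(\bar Q_s^n:\bar y_{j+1}\big)\subseteq\bar Q_s^n$, so $\frak b(\bar R)^{2^{s-1}}\bar r\subseteq\bar Q_s^n$. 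By Lemma~\ref{quotient} the image of $\frak b(R)$ in $\bar R$ lies in $\frak b(\bar R)$, hence $\frak b(R)^{2^{s-1}}\bar r\subseteq\bar Q_s^n$, which pulled back to $R$ reads $\frak b(R)^{2^{s-1}}\,r\subseteq(Q_s^n,y_1,\ldots,y_j)$. Since $r$, the system of parameters of $R/Q_s^n$, and $j$ were arbitrary, intersecting over all of them gives $\frak b(R)^{2^{s-1}}\subseteq\frak b(R/Q_s^n)$.

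With Theorem~\ref{MT} and Lemma~\ref{quotient} available the argument is essentially bookkeeping; the only point requiring a little care is verifying at each stage that the lifted and truncated sequences are genuine systems of parameters (or parts of them), which always reduces to $\sqrt{Q_s^n}=\sqrt{Q_s}$ and its effect on dimension, so I do not expect a real obstacle. (The case $j=0$ is just $\bar R=R$ and may also be read off directly from $\frak b(R)^{2^{s-1}}\subseteq\frak c_s(R)$.)
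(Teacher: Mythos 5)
Your argument is correct and follows essentially the same route as the paper: unwind $\frak b(R/Q_s^n)$ as the intersection over systems of parameters $\underline{y}$ of $R/Q_s^n$ and all $j < d-s$, pass to $\bar R = R/(y_1,\ldots,y_j)$, apply Theorem~\ref{MT}(1) there to get $\frak b(\bar R)^{2^{s-1}} \subseteq \frak c_s(\bar R)$, and conclude via Lemma~\ref{quotient}. The only difference is that you make explicit the (routine) verification that the lifted sequences remain systems of parameters, which the paper leaves implicit.
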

\begin{proof} By the definition we have
$$\frak b(M/Q_s^nM) = \bigcap_{\underline{y}, i< d-s} \mathrm{Ann} \big(\frac{(Q_s^n, y_1, \ldots, y_i)M: y_{i+1}}{(Q_s^n, y_1, \ldots, y_i)M}\big),$$
where $\underline{y} = y_1, \ldots, y_{d-s}$ runs over all systems of parameters of $M/Q_s^nM$. Applying Theorem \ref{MT} for $M/(y_1, \ldots, y_i)M$ we have 
$$\frak b(M/(y_1, \ldots, y_i)M)^{2^{s-1}} \subseteq \frak c_s(M/(y_1, \ldots, y_i)M).$$
Therefore 
$$\frak b(M/(y_1, \ldots, y_i)M)^{2^{s-1}} \big(\frac{(Q_s^n, y_1, \ldots, y_i)M: y_{i+1}}{(Q_s^n, y_1, \ldots, y_i)M}\big) = 0.$$
On the other hand $\frak b(M) \subseteq \frak b(M/(y_1, \ldots, y_i)M)$ by Remark \ref{quotient}. Hence 
$$\frak b(M)^{2^{s-1}} ((Q_s^n, y_1, \ldots, y_i)M: y_{i+1}) \subseteq (Q_s^n, y_1, \ldots, y_i)M$$
for all $\underline{y}$ and all $i < d-s$. The proof is complete.
\end{proof}

\begin{corollary}
Let $(R, \frak m)$ be a local ring and $M$ a finitely generated $R$-module of dimension $d>0$. Then for all $1 \le s \le d-1$ and for all systems of parameters $\underline{x} = x_1, \ldots, x_d$ of $M$ we have
$$\frak a(M)^{2^{s-1}} H^i_{\frak m} (M/Q_s^nM) = 0$$
for all $n\ge 1$ and for all $i < d-s$.
\end{corollary}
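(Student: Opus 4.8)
The plan is to read off this corollary from the preceding one together with Schenzel's inclusions recorded in Remark~\ref{C3.1.2}. The point is that the chain $\frak a(R)\subseteq\frak b(R)$, the newly proved containment $\frak b(R)^{2^{s-1}}\subseteq\frak b(R/Q_s^n)$, and the upper bound $\frak b(\,\cdot\,)\subseteq\bigcap_i\frak a_i(\,\cdot\,)$ all compose to give exactly the desired annihilation.

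First I would set $\bar R = R/Q_s^n$ and record two elementary facts. Since $x_1,\dots,x_s$ is part of a system of parameters of $R$ and $\sqrt{Q_s^n}=\sqrt{Q_s}$, the ring $\bar R$ is local of dimension $d-s$; in particular $d-s\ge 1$ because $s\le d-1$, so $\frak b(\bar R)$ is defined. Also $H^i_{\frak m}(R/Q_s^n)$ coincides with the local cohomology of $\bar R$ with respect to its maximal ideal $\frak m/Q_s^n$, hence is an $\bar R$-module; this is the usual base-independence of local cohomology.

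Next I would invoke the previous corollary, which gives $\frak b(R)^{2^{s-1}}\subseteq\frak b(\bar R)$ for every $n\ge 1$. Applying Remark~\ref{C3.1.2}(1) to the local ring $\bar R$ of dimension $d-s$ yields $\frak b(\bar R)\subseteq\frak a_0(\bar R)\cap\cdots\cap\frak a_{d-s-1}(\bar R)$, i.e. $\frak b(\bar R)$ annihilates $H^i_{\frak m}(\bar R)$ for every $i<d-s$. Finally, since $\frak a(R)\subseteq\frak b(R)$ (Remark~\ref{C3.1.2}(1) again), we obtain
$$\frak a(R)^{2^{s-1}}\subseteq\frak b(R)^{2^{s-1}}\subseteq\frak b(R/Q_s^n)\subseteq\Ann H^i_{\frak m}(R/Q_s^n)$$
for all $n\ge 1$ and all $i<d-s$, which is the assertion. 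There is no genuine obstacle: the only things to verify are the dimension count $\dim R/Q_s^n=d-s$ and the identification of $H^i_{\frak m}(R/Q_s^n)$ as a module over $R/Q_s^n$, both of which are routine, after which the statement is a one-line concatenation of the three inclusions above.
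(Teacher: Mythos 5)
Your proof is correct and is the natural one-line derivation the paper intends (the corollary appears without proof, immediately after the preceding corollary). The chain $\frak a(R)^{2^{s-1}}\subseteq\frak b(R)^{2^{s-1}}\subseteq\frak b(R/Q_s^n)\subseteq\frak a_0(R/Q_s^n)\cap\cdots\cap\frak a_{d-s-1}(R/Q_s^n)$, combining the previous corollary with Schenzel's inclusions from Remark 2.2(1) applied to the $(d-s)$-dimensional ring $R/Q_s^n$, together with the routine observations about $\dim R/Q_s^n = d-s$ and base-independence of local cohomology, is exactly the right argument.
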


\begin{corollary}
Let $(R, \frak m)$ be a local ring and $M$ a $k$-Buchsbaum $R$-module of dimension $d>0$ i.e. $\frak m^k H^i_{\frak m}(M) = 0$ for all $i<d$. Then for all $1 \le s \le d-1$ and for all systems of parameters $\underline{x} = x_1, \ldots, x_d$ of $M$ we have
$$\frak m^{kd2^{s-1}} H^i_{\frak m} (M/Q_s^nM) = 0$$
for all $n\ge 1$ and for all $i < d-s$.
\end{corollary}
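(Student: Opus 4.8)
The plan is to derive this from the preceding Corollary by converting the $k$-Buchsbaum hypothesis into a containment of ideals. First I would note that $\frak m^k H^i_{\frak m}(R) = 0$ for every $i < d$ says exactly that $\frak m^k \subseteq \frak a_i(R)$ for each $i = 0, 1, \ldots, d-1$. Since $\frak a(R) = \frak a_0(R) \cdots \frak a_{d-1}(R)$ is a product of precisely $d$ such ideals, multiplying these $d$ containments together gives $\frak m^{kd} \subseteq \frak a(R)$, and hence $\frak m^{kd 2^{s-1}} = (\frak m^{kd})^{2^{s-1}} \subseteq \frak a(R)^{2^{s-1}}$.

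Now I would invoke the preceding Corollary, which asserts that $\frak a(R)^{2^{s-1}} H^i_{\frak m}(R/Q_s^n) = 0$ for all $n \ge 1$ and all $i < d-s$. Combining this with the containment above immediately yields $\frak m^{kd 2^{s-1}} H^i_{\frak m}(R/Q_s^n) = 0$ for all such $n$ and $i$, which is exactly the claimed vanishing.

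There is no genuine obstacle in this argument: the entire content is carried by Theorem \ref{MT} (through the preceding Corollary), while the $k$-Buchsbaum hypothesis enters only through the elementary fact that a product of $d$ ideals each containing $\frak m^k$ contains $\frak m^{kd}$. The one point to state carefully is that $\frak a(R)$ is defined as a \emph{product}, not an intersection, of the $\frak a_i(R)$, which is precisely why the exponent $kd$ (rather than merely $k$) appears in front of $\frak m$; all remaining steps are a direct substitution into the previous corollary.
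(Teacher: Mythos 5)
Your proof is correct and is exactly the argument the paper intends (the corollary is stated without proof, following immediately from the previous one). Translating the $k$-Buchsbaum hypothesis into $\frak m^k \subseteq \frak a_i(R)$ for each $i<d$, multiplying the $d$ containments to get $\frak m^{kd}\subseteq\frak a(R)$, and then applying the preceding corollary is the standard and intended reduction.
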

\begin{proof} If $M$ is $k$-Buchsbaum then we have $\frak m^{kd} \subseteq \frak a (R)$. Therefore the assertion is a special case of the previous  result.
\end{proof}
We finally note that $\ell(H^i_{\frak m} (M/Q_s^nM))$ maybe a polynomial ring in $n$ of degree $s-1$ by \cite[Theorem 3.10]{CNQ18}.

\end{document}